\newtheorem{lemma}{Lemma}[section]
\newtheorem{thm}{Theorem}[section]
\newtheorem{prop}{Proposition}[section]
\newtheorem{cor}{Corollary}[section]
\theoremstyle{remark}
\newtheorem{remark}{Remark}[section]
\numberwithin{equation}{section}
\def\tr{\textmd{tr}}
\def\dint{\displaystyle\int}
\def\R{\mathbb{R}}
\def\R{\mathbb{R}}
\def\vh{\vspace{.1cm}}
\def\p{\partial}
\def\So{\S_0}
\def\m{\mathfrak{m}}
\def\mh{\mathfrak{m}_{_H}}
\def\a{\alpha}
\def\b{\beta}
\def\({\left(}
\def\){\right)}
\def\[{\left[}
\def\]{\right]}
\def\S{\Sigma}
\def\u{\overline u}
\def\me{m_e}
\def\mb{\mathfrak{m}_{_B}}
\newcounter{mnotecount}[section]
\begin{document}
	
	\title[Asymptotically Flat Extensions of CMC Bartnik Data]{Asymptotically Flat Extensions 
		\\of CMC Bartnik Data}
	
	\author[Cabrera]{Armando J. {Cabrera Pacheco}}
	\address{Department of Mathematics,  University of Connecticut, Storrs, CT 06269, USA.}
	\email{a.cabrera@uconn.edu}
	
	\author[Cederbaum]{Carla Cederbaum}
	\address{Department of Mathematics, Universit\"at T\"ubingen,  72076 T\"{u}bingen, Germany.}
	\email{cederbaum@math.uni-tuebingen.de}
	
	\author[McCormick]{Stephen McCormick}
	\address{Institutionen f\"{o}r Matematik, Kungliga Tekniska h\"{o}gskolan, 100 44 Stockholm, Sweden; and School of Science and Technology, University of New England, Armidale, NSW 2351, Australia.}
	\email{stephen.mccormick@une.edu.au}
	
	\author[Miao]{Pengzi Miao}
	\address{Department of Mathematics, University of Miami, Coral Gables, FL 33146, USA.}
	\email{pengzim@math.miami.edu}
	
	\begin{abstract}
		Let $g$ be a metric on the $2$-sphere $\mathbb{S}^2$ with positive Gaussian curvature and $H$ be a positive constant. Under suitable conditions on $(g, H)$, we construct smooth, asymptotically flat $3$-manifolds $M$ with non-negative scalar curvature, with outer-minimizing boundary isometric to $(\mathbb{S}^2, g)$ and having mean curvature $H$, such that near infinity $M$ is isometric to a spatial Schwarzschild manifold whose mass {$m$} can be made arbitrarily close to a constant multiple of the Hawking mass of $(\mathbb{S}^2,g,H)$. Moreover, this constant multiplicative factor depends only on $(g, H)$ and tends to $1$ as $H$ tends to $0$. 
		The result provides a new upper bound of the Bartnik mass associated to such boundary data. 
	\end{abstract}
	
	\maketitle 
	
	\section{introduction}
	Let $g$ be a metric on the $2$-sphere $\mathbb{S}^2$ for which the first eigenvalue of $ - \Delta + K $ is positive, where $ K$ is the Gaussian curvature of $g$. Mantoulidis and Schoen \cite{M-S} constructed asymptotically flat $3$-manifolds with non-negative scalar curvature, whose boundaries are outermost minimal surfaces that are isometric to $(\mathbb{S}^2, g)$ and with ADM mass $m_{ADM}$ \cite{ADM} that can be made arbitrarily close to the optimal value determined by the Riemannian Penrose inequality \cite{Bray01, H-I01}.
	
	In terms of the quasi-local mass $ \mb$ introduced by Bartnik \cite{Bartnik,BartnikTsingHua}, the result of Mantoulidis and Schoen can be reformulated  as follows:
	Given smooth \emph{Bartnik data} $(\S , g, H)$, i.e., a triple with $\S  \simeq \mathbb{S}^2$,  $g$ a metric and $H$ a function on  $ \S$, if $H=0$ and $g$ {has positive first eigenvalue}  $\lambda_1 ( - \Delta + K ) > 0 $, then {$\mb(\S,g,H)$ of such Bartnik data is bounded above by their Hawking mass $\mh(\S,g,H)$:}
	\begin{equation} \label{eq-M-S}
	\mb (\S, g, 0) \le \mh (\S , g, 0),
	\end{equation}
	where the Hawking mass $\mh$ \cite{Hawking} is defined  by
	\begin{align*}
	\mh ( \S, g , H) \coloneqq \sqrt{ \frac{ | \S |_{g} }{ 16 \pi} } \left( 1 - \frac{1}{16 \pi} \int_{\S} H^2 d \sigma \right).
	\end{align*}
	Bartnik's quasi-local mass $\mb$ is defined as
	\begin{align*}
	\mb(\S,g,H) &\coloneqq\inf\left\lbrace m_{ADM}(M,\gamma)\,\vert\,(M,\gamma) \text{ admissible extension of }(\S,g,H)\right\rbrace,
	\end{align*}
	where a smooth, asymptotically flat Riemannian $3$-manifold $(M,\gamma)$ with boundary $\p M$ is called an \emph{admissible extension of $(\S,g,H)$} if it has non-negative scalar curvature and if $(\S,g)$ is isometric to $\p M$ with mean curvature $H$. Moreover, it is required that $(M,\gamma)$ contains no closed minimal surfaces (except possibly $\p M$) or, a fortiori, that $\p M$ is outer-minimizing in $(M,\gamma)$,  see \cite{Bray01,Bray04,H-I01}. Via the proof of the Riemannian Penrose inequality in \cite{H-I01}, the outer-minimizing condition allows one to estimate the Bartnik mass of given Bartnik data from below by its Hawking mass.
	
	For the purpose of this paper, it will make no difference which of the 
	above two conditions -- no closed minimal surfaces versus outer-minimizing -- is chosen as 
	the extensions we construct satisfy both conditions. 
	Here and in the following, we will abbreviate $ \mh (\S) \coloneqq \mh (\S, g, H)$ and $\mb(\S) \coloneqq \mb (\S, g, H)$ whenever the choice of $g$ and $H$ is clear from context.
	
	In the important
	case of $H =0 $ treated by Mantoulidis and Schoen, combined with the Riemannian Penrose inequality, 
	\eqref{eq-M-S} implies $ \mb (\S, g, 0)  = \mh (\S, g, 0) $. We note that the condition $\lambda_1( - \Delta + K) > 0 $
	used in this setting arises naturally in the context of stable minimal surfaces. 
	
	In this paper, we give an analogue of Mantoulidis and Schoen's result  for a triple of Bartnik data $(\S, g, H)$
	that is associated to a constant mean curvature (CMC) surface. As a special case of our main Theorem (Theorem \ref{thm-extension}), we have  the following  result on constructing asymptotically flat extensions {with controlled ADM mass}. 
	
	\begin{thm}\label{thm-intro}
		Let $(\S\simeq\mathbb{S}^{2}, g, H )$ be a triple of Bartnik data where $H = H_o $ is a positive constant and $g$ 
		has positive Gaussian curvature. 
		There exist constants $ \alpha \ge 0  $ and  $ 0 < \beta \le 1 $, depending on $g$,  such that if 
		\begin{align}\label{eq-Willmore-condition}
		\frac{1}{16 \pi} \int_{\S} H_{o}^2 d \sigma < \frac{ \beta}{ 1 + \alpha } , 
		\end{align}
		then for each $m > m_*$, with
		\begin{align*}
		m_* =     \left[ 1 + \left(  \frac{ \alpha \left(\frac{1}{16\pi}  \int_{\S} H_{o}^2 d \sigma  \right) } { \beta  - \left( 1 +  \alpha \right) \left( \frac{1}{16\pi}\int_{\S} H_{o}^2 d \sigma   \right) } \right)^\frac12  \right]  \mh (\S,g,H_o),
		\end{align*}
		there exists a smooth, asymptotically flat {Riemannian }$3$-manifold $(M,\gamma)$ with boundary $\p M$ and non-negative scalar curvature such that
		\begin{enumerate}[(i)]
			\item $\p M$ is isometric to $(\S,g)$ and has constant mean curvature $H_o$; 
			\item $M$, outside a compact set, is isometric to a spatial Schwarzschild manifold $M^S_m$ of mass $m$; and 
			\item $M$ is foliated by mean convex $2$-spheres which  eventually coincide with the rotationally symmetric $2$-spheres  
			in $M^S_m$. 
		\end{enumerate}
	\end{thm}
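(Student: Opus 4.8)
The plan is to adapt the collar-and-Schwarzschild construction of Mantoulidis--Schoen, the new feature being that the warping function must carry a positive initial slope in order to realize the prescribed constant mean curvature $H_o$ on $\p M$. Write $A:=|\S|_g$ and, on $[0,T]\times\S$, consider $\gamma=dt^2+u(t)^2 g_t$, where $\{g_t\}_{t\in[0,T]}$ is a smooth path of area-$A$ metrics on $\S$ with $g_0=g$ and $g_T$ round, $\dot g_t$ normalized to be trace-free with respect to $g_t$, and $u>0$ is to be chosen. Then $\S_t:=\{t\}\times\S$ is isometric to $(\S,u(t)^2 g_t)$, has area $u(t)^2 A$, and has constant mean curvature $2\dot u(t)/u(t)$ along $\p_t$, while a Gauss/Riccati computation gives
\begin{equation*}
u^2\,\Scal_\gamma \;=\; 2K_{g_t}-\tfrac14 u^2|\dot g_t|_{g_t}^2-4u\ddot u-2\dot u^2 .
\end{equation*}
Imposing $u(0)=1$ and $\dot u(0)=\tfrac12 H_o$ makes $\S_0$ isometric to $(\S,g)$ with constant mean curvature $H_o$, and since then $\int_{\S_0}H_o^2\,d\sigma=4\dot u(0)^2 A$, the Hawking mass of $\S_0$ equals $\mh(\S,g,H_o)$. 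Requiring in addition that $g_t$ be round for $t\ge t_*$ and that $\Scal_\gamma$ vanish there turns $\{t\ge t_*\}$ into an honest spatial Schwarzschild region, so that $(M,\gamma)$ is smooth, coincides with some $M^S_m$ outside a compact set, and is asymptotically flat with ADM mass $m$.

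Since $g$ has positive Gaussian curvature (hence $\lambda_1(-\Lap+K)>0$), the Mantoulidis--Schoen path construction underlying \eqref{eq-M-S} produces such a path along which $2K_{g_t}-\tfrac14 u^2|\dot g_t|_{g_t}^2\ge\tfrac{8\pi}{A}\b$ for a constant $\b=\b(g)\in(0,1]$; the bound $\b\le1$ is forced by Gauss--Bonnet, because the area average of $2K_{g_t}$ is $8\pi/A$, with $\b=1$ only when $g$ is round. It then suffices to find $u$ with $u(0)=1$, $\dot u(0)=\tfrac12 H_o$, $\dot u>0$, solving the differential inequality $4u\ddot u+2\dot u^2\le 2K_{g_t}-\tfrac14 u^2|\dot g_t|_{g_t}^2$ (i.e.\ $\Scal_\gamma\ge0$) and saturating it once $g_t$ is round. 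The device that controls the mass is a Geroch-type monotonicity: combining the displayed identity with the Gauss--Bonnet inequality $2K_{g_t}-\tfrac14 u^2|\dot g_t|_{g_t}^2\le 8\pi/A$ gives $\tfrac{d}{dt}\mh(\S_t)\ge0$ wherever $\dot u>0$, so the ADM mass $m=\mh(\S_{t_*})$ of the completed manifold is automatically $\ge\mh(\S_0)=\mh(\S,g,H_o)$, in agreement with the Huisken--Ilmanen bound $\mb(\S)\ge\mh(\S)$.

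To bring $m$ down to the asserted value one chooses $u$ so that $\Scal_\gamma\equiv0$ --- making the collar ``as Schwarzschildean as possible'' --- and integrates the resulting monotonicity, using $u$ as the independent variable:
\begin{equation*}
m-\mh(\S,g,H_o)\;=\;\sqrt{\tfrac{A}{16\pi}}\int_1^{u(t_*)}\Big(1-\tfrac{A}{8\pi}\min_\S\big(2K_{g_{t(u)}}-\tfrac14 u^2|\dot g_{t(u)}|^2_{g_{t(u)}}\big)\Big)\,du ,
\end{equation*}
an integral whose integrand lies in $[0,1-\b]$ and vanishes once $g_t$ is round. Minimizing the right-hand side over admissible paths $\{g_t\}$ --- which pits the ``speed'' of the path (a faster path shortens the $u$-interval of integration but enlarges the integrand through the term $-\tfrac14 u^2|\dot g_t|_{g_t}^2$) against its intrinsic curvature defect (how far $\min_\S K_{g_t}$ stays below its area average) --- yields a constant $\a=\a(g)\ge0$, which is $0$ exactly when $g$ is round, for which the optimal $m$ is precisely the $m_*$ of the statement, valid as soon as the denominator $\b-(1+\a)w$ is positive, where $w:=\tfrac1{16\pi}\int_\S H_o^2\,d\sigma=\tfrac{A}{4\pi}\dot u(0)^2$; this positivity is exactly \eqref{eq-Willmore-condition}. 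As $H_o\to0$, both $w$ and $\dot u(0)$ tend to $0$, the construction reduces to that of Mantoulidis--Schoen, and $m_*\to\mh(\S,g,H_o)$.

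It remains to note that, keeping $\dot u>0$ throughout, every $\S_t$ is strictly mean convex and, for $t\ge t_*$, is one of the rotationally symmetric spheres of $M^S_m$; since $\dot u(t_*)>0$ forces the gluing area radius to exceed $2m$, the Schwarzschild horizon lies outside $M$, so $M$ contains no closed minimal surface other than possibly $\p M=\S_0$, which is not minimal, and the outward mean convex foliation renders $\p M$ outer-minimizing --- giving (i)--(iii). The principal obstacle is the optimization step: for $H_o=0$ one has $\dot u(0)=0$ and can push $m$ all the way down to the Riemannian--Penrose value $\sqrt{|\S|_g/16\pi}$, but here the inequality for $u$ is loaded from the start by $\dot u(0)=\tfrac12 H_o>0$, and quantifying --- through $(g,H_o)$ alone, by means of $\a$ and $\b$ --- exactly how much Hawking mass must be gained before a Schwarzschild end can be attached is what produces both the admissibility threshold \eqref{eq-Willmore-condition} and the multiplicative correction appearing in $m_*$.
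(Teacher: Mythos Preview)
Your outline has the right architecture --- a collar deforming $g$ to a round metric, followed by a Schwarzschild attachment --- but the central steps are not carried out, and one is incorrect as written. You cannot choose a function $u=u(t)$ so that $\Scal_\gamma\equiv 0$ on the collar: in your own formula $u^2\Scal_\gamma=2K_{g_t}-\tfrac14 u^2|\dot g_t|^2_{g_t}-4u\ddot u-2\dot u^2$, the right-hand side depends on the point of $\Sigma$ through $K_{g_t}$ and $|\dot g_t|^2_{g_t}$, while $u$ depends only on $t$. At best one can enforce $4u\ddot u+2\dot u^2=\min_\Sigma(\cdots)$, giving $\Scal_\gamma\ge 0$; but then your displayed integral for $m-\mh(\Sigma)$ does not follow from Hawking-mass monotonicity (the Geroch identity involves the \emph{integral} of $\Scal_\gamma$ over $\Sigma_t$, not a pointwise minimum), and in any event you never evaluate it. Relatedly, the claimed bound $2K_{g_t}-\tfrac14 u^2|\dot g_t|^2_{g_t}\ge \tfrac{8\pi}{A}\beta$ involves the unknown $u$ and hence is not a property of the path alone; it can fail as $u$ grows.

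More seriously, the constants $\alpha$ and $\beta$ are never actually defined, and the specific form of $m_*$ is asserted rather than derived: the sentence ``minimizing the right-hand side over admissible paths \ldots\ yields a constant $\alpha=\alpha(g)$ \ldots\ for which the optimal $m$ is precisely the $m_*$ of the statement'' is not a proof. In the paper, $\alpha$ and $\beta$ are given \emph{explicitly} for a fixed path by $\alpha=\tfrac14\max_{t,\Sigma}|g'(t)|^2_{g(t)}$ and $\beta=r_o^2\min_{t,\Sigma}K(g(t))$; the warping function is taken concretely to be a Schwarzschild profile $u_m(A_o k t)/r_o$ (the Miao--Xie collar), and the bound $\mh(\Sigma_1)\le m_*$ is a direct computation with that explicit $u$, not an optimization. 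Finally, the smooth transition to an exact Schwarzschild end is a genuine issue that you gloss over: simply declaring $g_t$ round and $\Scal_\gamma=0$ for $t\ge t_*$ does not make the metric $C^2$ across $t=t_*$; the paper handles this with a separate rotationally symmetric gluing argument (Proposition~\ref{prop-extension} and Lemma~\ref{gluing-lemma}).
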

	
	\begin{remark}
		Condition \eqref{eq-Willmore-condition} implies 
		\begin{align*}
		\frac{1}{16\pi}\int_\S H_{o}^2 d \sigma < \frac{\beta}{1 + \alpha} \le 1,
		\end{align*}
		hence $ \mh (\S) > 0 $ is implicitly assumed in Theorem \ref{thm-intro}.
		Moreover, \emph{(iii)} implies that $\p M$ is outer-minimizing in $M$.
	\end{remark}
	
	Similar to the implication of Mantoulidis and Schoen's result on the Bartnik mass, Theorem \ref{thm-intro} has the following direct corollary. 
	
	\begin{cor}
		Let $(\S\simeq\mathbb{S}^{2}, g, H )$ be a triple of Bartnik data where $H = H_o $ is a positive constant and $g$ 
		has positive Gaussian curvature.  Suppose  \eqref{eq-Willmore-condition} holds, then 
		\begin{align}\label{eq-B-bound}
		\mb (\S, g, H_o) \le  \left[ 1 + \left(  \frac{ \alpha \left(\frac{1}{16\pi}  \int_{\S} H_{o}^2 d \sigma  \right) } { \beta  - \left( 1 +  \alpha \right) \left( \frac{1}{16\pi}\int_{\S} H_{o}^2 d \sigma   \right) } \right)^\frac12  \right]  \mh (\S,g,H_o)  .
		\end{align}
	\end{cor}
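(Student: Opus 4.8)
The plan is to read this off directly from Theorem \ref{thm-intro} together with the definition of the Bartnik mass $\mb$ as an infimum over admissible extensions. Fix Bartnik data $(\S \simeq \mathbb{S}^2, g, H_o)$ with $g$ of positive Gaussian curvature and $H_o$ a positive constant satisfying \eqref{eq-Willmore-condition}, and let $\alpha \ge 0$, $0 < \beta \le 1$ be the associated constants. I would set
\begin{align*}
m_* = \left[ 1 + \left(  \frac{ \alpha \left(\frac{1}{16\pi}  \int_{\S} H_{o}^2 d \sigma  \right) } { \beta  - \left( 1 +  \alpha \right) \left( \frac{1}{16\pi}\int_{\S} H_{o}^2 d \sigma   \right) } \right)^\frac12  \right]  \mh (\S,g,H_o),
\end{align*}
which is a well-defined positive number: condition \eqref{eq-Willmore-condition} makes the denominator inside the square root strictly positive and, as recorded in the Remark following Theorem \ref{thm-intro}, also forces $\mh(\S) > 0$.

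The first step is to verify that, for each $m > m_*$, the manifold $(M,\gamma)$ produced by Theorem \ref{thm-intro} is an \emph{admissible extension} of $(\S, g, H_o)$ in the sense of the introduction. By construction it is a smooth, asymptotically flat Riemannian $3$-manifold with non-negative scalar curvature; by (i) its boundary $\partial M$ is isometric to $(\S, g)$ and has mean curvature $H_o$; and by (iii) it is foliated by mean convex $2$-spheres, so $M$ contains no closed minimal surface other than possibly $\partial M$, and in particular $\partial M$ is outer-minimizing, as already noted in the Remark. Hence every requirement in the definition of admissibility is met. Next I would observe that, since $M$ agrees outside a compact set with the spatial Schwarzschild manifold $M^S_m$ of mass $m$, its ADM mass is exactly $m$.

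Combining these two observations, for every $m > m_*$ we obtain an admissible extension of $(\S, g, H_o)$ whose ADM mass equals $m$, so by the definition of $\mb$,
\begin{align*}
\mb(\S, g, H_o) \le m \qquad \text{for all } m > m_*.
\end{align*}
Letting $m \to m_*^{+}$ (equivalently, taking the infimum over all such $m$) gives $\mb(\S, g, H_o) \le m_*$, which is precisely \eqref{eq-B-bound}.

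There is essentially no obstacle in the corollary itself once Theorem \ref{thm-intro} is available; the only point deserving a moment's care is confirming that the constructed extensions satisfy the outer-minimizing (equivalently, no-interior-closed-minimal-surface) condition, and this is exactly what the mean convex foliation in (iii) provides. All the genuine difficulty — keeping the scalar curvature non-negative while interpolating from the CMC Bartnik boundary to a Schwarzschild end, and quantifying how close the ADM mass of the resulting extension can be driven toward the stated multiple of the Hawking mass — is absorbed into the proof of Theorem \ref{thm-intro}.
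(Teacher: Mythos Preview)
Your proposal is correct and matches the paper's approach: the paper states this result as a ``direct corollary'' of Theorem \ref{thm-intro} with no separate proof, and your argument---that each $(M,\gamma)$ from Theorem \ref{thm-intro} is an admissible extension with ADM mass $m$, then letting $m\downarrow m_*$---is exactly the intended deduction. The only points you spell out (admissibility via the mean convex foliation in (iii), and $m_{ADM}(M)=m$ since $M$ is Schwarzschild near infinity) are the ones the paper leaves implicit.
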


	\begin{remark}
		Inequality \eqref{eq-B-bound} has the feature that the ratio between the upper bound it provides for 
		$\mb(\S)$ and the Hawking mass $\mh (\S)$ tends to $1$ as $H_o \to 0$. However, it assumes 
		\eqref{eq-Willmore-condition}. In \cite{L-S}, Lin and Sormani investigated the Bartnik mass of arbitrary 
		CMC Bartnik data $(\S, g, H_o)$, with area $|\S|_g=4\pi$, and proved that
		\begin{align}\label{eq-L-S}
		\mb (\S) \le  \m_{a\mathbb{S}} (\S) +  \mh (\S) ,
		\end{align}
		where $\m_{a\mathbb{S}} (\S)$, referred to as the \emph{asphericity mass}, is a non-negative constant 
		that is determined only by the metric $g$ on $ \S$. Note that, in contrast to 
		\eqref{eq-B-bound}, the ratio between the 
		right hand side of \eqref{eq-L-S} and the Hawking mass approaches a fixed constant $C> 1$ as $H \to 0$.
	\end{remark}
	
	\begin{remark}
		The quantity $ m_*$ in Theorem \ref{thm-intro} was  first found 
		by Miao and Xie  in \cite{M-X}. Indeed, the Bartnik mass 
		estimate \eqref{eq-B-bound} would follow from the result in \cite{M-X} if one allows admissible extensions 
		to be Lipschitz  with distributional non-negative scalar curvature across  a hypersurface (cf. \cite{Miao02, ShiTam02}). 
		Therefore, our main contribution here is to construct smooth extensions. 
	\end{remark}

	We will prove Theorem \ref{thm-intro} applying the method of  Mantoulidis and Schoen \cite{M-S}, which involves two steps. In the first step, one needs a collar extension of $(\S, g, H_o)$ with positive scalar curvature on $ [0, 1 ] \times \S$ such that the geometric information at $\S_0\coloneqq\{ 0 \} \times \S$, corresponding to $ (\S, g, H_o)$, is suitably propagated to the other end $\S_1 \coloneqq \{ 1 \} \times \S$, near which the extension is rotationally symmetric. (In the minimal surface case, it is primarily the area of $\S_1$ that one wants to compare with $\S_0$. When $H_o > 0 $, it is the two Hawking masses that one wants to compare.) In the second step, one smoothly glues the collar extension, at $ \S_1$, to a spatial Schwarzschild manifold (suitably deformed in a small region) with mass greater than, but arbitrarily close to, the Hawking mass of $ \S_1$.
	
	To implement this process, we make use of the collar extension constructed in \cite{M-X} in the first step as it provides a good control of the Hawking mass along the collar. For the second step, we prove  
	an elementary  result on smoothly gluing 
	a  rotationally symmetric manifold with positive scalar curvature and 
	a spatial Schwarzschild manifold with suitably chosen mass  (see Proposition \ref{prop-extension}). 
	We give these gluing tools in Section \ref{sec-gluing}, then in Section \ref{sec-main-theorem}, 
	we prove Theorem \ref{thm-extension} which implies Theorem \ref{thm-intro}. 
	
	\vspace{.3cm}
	
	\noindent {\em Acknowledgements.}  
	The work of CC and SM was partially supported by the DAAD and Universities Australia. CC is indebted to the Baden-W\"urttemberg Stiftung for the financial support of this research project by the Eliteprogramme for Postdocs. The work of CC is supported by the Institutional Strategy of the University of T\"ubingen (Deutsche Forschungsgemeinschaft, ZUK 63).
	The work of PM was partially supported by Simons Foundation Collaboration Grant for Mathematicians \#281105.

	\section{Gluing to a spatial Schwarzschild manifold}  \label{sec-gluing}
	In this section, we list some tools for gluing together two rotationally symmetric metrics with non-negative scalar curvature, 
	which are used to prove the main result  in Section \ref{sec-main-theorem}. 
	We start with a lemma  that  is  a slight generalization of \cite[Lemma 2.2]{M-S}. 
	As it may be of independent interest, we state it for arbitrary dimension $n \geq 2$.
	
	\begin{lemma} \label{gluing-lemma}
		Let $f_i:[a_i,b_i]\to \R{^{+}}$, where $i=1,2$, be smooth positive functions, and let $g_*$ be the standard metric on $\mathbb{S}^n$. Suppose that
		\begin{itemize}
			\item[(i)]  the metrics $\gamma_{i}\coloneqq dt^2+f_i(t)^2g_*$ have positive scalar curvature; \label{itemscalar}
			\item[(ii)] $f_1(b_1)<f_2(a_2)$; 
			\item[(iii)]    $ 1 > f_1'(b_1) > 0 $ and $ f_1'(b_1)  \geq f_2'(a_2) >  -1  $. 
		\end{itemize}
		Then,  after translating the intervals so that 
		\begin{equation} \label{eq-translation}
		\left\{ 
		\begin{array}{r l}
		( a_2  - b_1  )  f_1'(b_1)    =     f_2 (a_2) - f_1 (b_1), & \ \mathrm{if} \   f_1'(b_1)  = f_2'(a_2) , \\
		( a_2  - b_1  )  f_1'(b_1)   >   f_2 (a_2) - f_1 (b_1)  >  (a_2 - b_1) f_2'(a_2)   ,  & \ \mathrm{if} \ f_1'(b_1)  > f_2'(a_2) ,
		\end{array}
		\right.
		\end{equation}
		one can construct a smooth positive function $f:[a_1,b_2]\to\R{^{+}}$ so that:
		\begin{itemize}
			\item [(I)] $f\equiv f_1$ on $[a_1,\frac{a_1+b_1}{2}]$, $f\equiv f_2$ on $[\frac{a_2+b_2}{2},b_2]$, and
			\item[(II)]  $\gamma\coloneqq dt^2+f(t)^2g_*$ has positive scalar curvature on $[a_1,b_2] \times \mathbb{S}^n$.
		\end{itemize}
		Moreover, if $ f_i' > 0 $ on $[a_i, b_i]$, then $ f $ can be constructed {such} that $ f' > 0 $ on $[a_1, b_2]$.
	\end{lemma}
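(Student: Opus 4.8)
The plan is to reduce the gluing problem to an interpolation statement about the profile function $f$ and its derivative, working with the scalar curvature constraint. For a warped product $dt^2 + f(t)^2 g_*$ on $[a,b]\times\mathbb{S}^n$, the scalar curvature is a function of $f$, $f'$, $f''$ only; solving for the sign condition shows that positivity of scalar curvature is equivalent to an inequality of the form $f'' < \Phi(f,f')$ where $\Phi$ is continuous and, crucially, $\Phi(f,f') > 0$ whenever $|f'| < 1$ (this is where hypotheses (iii) enter, guaranteeing that a nearly-linear profile with small slope has slack in the constraint). Thus it suffices to construct $f$ that matches $f_1$ near $b_1$ and $f_2$ near $a_2$ to infinite order at the two endpoints, that has $|f'|<1$ throughout the transition region, and whose second derivative stays below the barrier $\Phi$.

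The key steps, in order: (1) Record the scalar curvature formula for the warped product and extract the sharp form of the constraint $f'' < \Phi(f,f')$, noting $\Phi > 0$ on $\{|f'| < 1\}$ and that $f_1, f_2$ satisfy it on their respective intervals. (2) Translate the intervals as in \eqref{eq-translation}: the point of that choice is that the straight line segment $L(t)$ joining the point $(b_1, f_1(b_1))$ with slope $f_1'(b_1)$ to the point $(a_2, f_2(a_2))$ with slope $f_2'(a_2)$ is realizable — in the equal-slope case it is a single segment, and in the strict case the inequalities say the secant slope lies strictly between the two endpoint slopes, so a convex/concave spline through it with the prescribed one-sided derivatives exists and still has slope in $(-1,1)$. (3) On $[\tfrac{a_1+b_1}{2}, b_1]$ deform $f_1$ — keeping it equal to $f_1$ near $\tfrac{a_1+b_1}{2}$ — so that near $b_1$ it agrees to infinite order with the linear profile $L$; since this is a small $C^2$ perturbation localized where there is constraint slack, positivity of scalar curvature is preserved, and similarly on $[a_2,\tfrac{a_2+b_2}{2}]$ for $f_2$. (4) On the middle interval $[b_1, a_2]$ interpolate between the two linear pieces: take $f$ close to $L$ in $C^0$ and $C^1$ but with $f''$ allowed to be a bounded bump; because $\Phi$ is bounded below by a positive constant on this compact region (as $|f'|<1$ there), any $f$ with $\|f''\|_\infty$ smaller than that constant satisfies the constraint. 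Standard cutoff/mollification glues these pieces into a single smooth $f$ on $[a_1,b_2]$ satisfying (I) and (II). (5) For the last sentence: if $f_i' > 0$ on the full intervals, then $f_1'(b_1) > 0$ and $f_2'(a_2)>0$, so the linear profile $L$ has positive slope; choosing all perturbations and the middle interpolation to keep $f' > 0$ (possible since we only ever perturb by small $C^1$ amounts and $L' > 0$ is bounded away from $0$) yields $f' > 0$ on $[a_1,b_2]$.

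The main obstacle is step (4) combined with the compatibility of the endpoint data: one must simultaneously (a) hit the prescribed values and one-sided derivatives at $b_1$ and $a_2$, (b) keep $|f'| < 1$ so the barrier $\Phi$ stays positive, and (c) keep $f''$ below that barrier. The translation condition \eqref{eq-translation} is precisely engineered so that these are not in conflict: in the strict-slope case it forces the secant slope of the middle segment to lie strictly between $f_1'(b_1)$ and $f_2'(a_2)$, both of which are in $(-1,1)$, so a monotone-derivative (hence convex or concave) interpolant exists with derivative trapped in the same open interval; in the equal-slope case the middle piece is genuinely linear and no curvature is needed there at all. Verifying that the infinitesimal matching in step (3) can be done within the scalar-curvature constraint is routine once one observes $\Phi$ is continuous and strictly positive at the relevant endpoints, so a sufficiently small and sufficiently localized perturbation cannot violate $f'' < \Phi$. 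I would also remark that this is only a mild generalization of \cite[Lemma 2.2]{M-S}: the new content is allowing $f_1'(b_1) > f_2'(a_2)$ rather than requiring equality, which is what makes the two-sided inequality in \eqref{eq-translation} appear.
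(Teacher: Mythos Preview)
Your strategy is right and coincides with the paper's at the level of ideas: rewrite positive scalar curvature as $f'' < \Phi(f,f')$ with $\Phi(f,f') = \tfrac{n-1}{2f}(1-(f')^2) > 0$ whenever $|f'|<1$, place a concave bridge on $[b_1,a_2]$ with derivative pinned in $[f_2'(a_2), f_1'(b_1)] \subset (-1,1)$, and smooth. The gap is in step~(3). Deforming $f_1$ near $b_1$ to agree with the tangent line $L$ is \emph{not} a small $C^2$ perturbation: it must move $f''$ from $f_1''(b_1)$ to $0$, and a cutoff construction $\tilde f_1 = (1-\chi)f_1 + \chi L$ on scale $\delta$ produces error terms $-2\chi'(f_1'-L') - \chi''(f_1-L)$ in $\tilde f_1''$ of size roughly $C|f_1''(b_1)|$ \emph{independently of} $\delta$, with no reason to lie below $\Phi$. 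A related slip occurs in step~(4): you cannot in general force $\|f''\|_\infty < \inf\Phi$, since $\int_{b_1}^{a_2} f'' = f_2'(a_2)-f_1'(b_1)$ is prescribed and $a_2-b_1$ is bounded by \eqref{eq-translation}; the correct observation (already implicit in your step~(2)) is simply that the interpolant may be taken \emph{concave}, so $f'' \le 0 < \Phi$ with no smallness required.

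The paper dispenses with step~(3) altogether. It assembles a single $C^{1,1}$ profile $\tilde f$ --- equal to $f_1$ on $[a_1,b_1]$, to a concave $\hat f$ on $[b_1,a_2]$ obtained by integrating a nonincreasing $\zeta\in C^1$ from $f_1'(b_1)$ down to $f_2'(a_2)$, and to $f_2$ on $[a_2,b_2]$ --- and then mollifies once, via $f_\varepsilon(t) = \int \tilde f(t-\varepsilon\eta_\delta(t)s)\phi(s)\,ds$, with a variable width $\varepsilon\eta_\delta(t)$ that vanishes on the subintervals where exact agreement with $f_i$ is demanded. The device that replaces your step~(3) is that this mollification yields $f_\varepsilon''(t) \le \sup_{|s-t|<\varepsilon}\tilde f''(s) + o(1)$, while by construction there is a uniform gap $\inf\bigl(\Phi(\tilde f,\tilde f') - \tilde f''\bigr) \eqqcolon 3d > 0$; combined with $f_\varepsilon \to \tilde f$ in $C^1$ (hence $\Phi(f_\varepsilon,f_\varepsilon')\to\Phi(\tilde f,\tilde f')$ in $C^0$), this gives $f_\varepsilon'' < \Phi(f_\varepsilon,f_\varepsilon')$ for small $\varepsilon$. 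Your outline can be repaired by exactly this mollification argument, after which the separate ``linearize the ends'' step becomes redundant.
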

	
	\begin{remark}
		The assumptions in Lemma \ref{gluing-lemma} are weaker than those  in  \cite[Lemma 2.2]{M-S}
		employed by Mantoulidis and Schoen.
		In \cite[Lemma 2.2]{M-S}, it was assumed {that} $ f_i' > 0$ and $ f_i'' > 0 $, which implies that $ f_i' < 1 $ 
		since each $\gamma_{i}$ has positive scalar curvature (cf. \eqref{Rfcond}); moreover,  we relax  
		the assumption $f'_1(b_1)=f'_2(a_2)$ in \cite[Lemma 2.2]{M-S}   to $f'_1(b_1) \geq f'_2(a_2)$, 
		which corresponds to a mean curvature jump condition used in the proof of the positive mass 
		theorem with corners by Miao  \cite{Miao02}.
		This  assumption in terms of an inequality helps one 
		simplify  the  metric gluing procedure used later.
	\end{remark}
	
	\begin{proof}
		By (ii) and (iii), the interval $[a_2, b_2]$ can always be translated so that  \eqref{eq-translation} holds. 
		Assume that such a translation has been performed.
		On  $[b_1, a_2]$, \eqref{eq-translation} implies that there exists 
		a function $\zeta\in C^1([b_1, a_2])$ such that $\zeta (b_1) = f_1' (b_1)$, $ \zeta (a_2) = f_2' (a_2)$, $\zeta ' \le 0$, and
		$  \int_{b_1}^{a_2} \zeta (t)\, d t = f_2 (a_2) - f_1 (b_1)  $.
		On  $[b_1, a_2]$, define 
		$$ \widehat f (t) \coloneqq f_1(b_1) + \int_{b_1}^t  \zeta(x) \,d x .$$
		Then $\widehat f$ satisfies 
		\begin{align*}
		\widehat f (b_1) &= f_1(b_1) \text{ and } \widehat f (a_2)= f_2(a_2),\\
		{\widehat f} ' (b_1) &= f_1'(b_1) \text{ and } \widehat f\,' (a_2) = f_2'(a_2),\\
		1 > f_1'(b_1 ) &\geq \widehat f'(t) \geq f_2'(a_2) > -1  \text{ on }(b_1,a_2),\\
		{\widehat f} \,''(t) &=  \zeta'(t) \le 0  \text{ on }[b_1,a_2].
		\end{align*}
		On $[a_1, b_2]$, define 
		\begin{align*}
		\widetilde{f}(t) \coloneqq\left\{\begin{matrix}
		f_1(t)&\text{ on } [a_1,b_1]\\
		\widehat f (t)&\text{ on } [b_1,a_2]\\
		f_2(t)&\text{ on } [a_2,b_2]
		\end{matrix}\right. . 
		\end{align*}
		Then	$\widetilde{f}\in C^{1,1}([a_1, b_2])$, $C^{2}$ away from $b_{1}$, $a_{2}$, and $\widetilde f > 0 $. Slightly abusing notation, we will write $\widetilde{f}''$ on all of $[a_{1},b_{2}]$ extending it to $b_{1}$, $a_{2}$ by setting $\widetilde f''(b_{1})\coloneqq f_{1}''(b_{1})$ and $\widetilde f''(a_{2})\coloneqq f_{2}''(a_{2})$. 
		
		We next consider an appropriate mollification of $ \widetilde{f}$ (cf.  \cite{C-M}). 
		Let $\delta >0$ be such that
		\begin{align*}
		\dfrac{a_1+b_1}{2} < b_1 - \delta\ \ \text{and} \ \ a_2+\delta < \dfrac{a_2+b_2}{2}. 
		\end{align*}
		Let  $\eta_{\delta}:[a_{1},b_{2}]\to\mathbb{R}^{+}_{0}$ be a smooth cutoff function which equals $1$ on $[b_1-\delta,a_2+\delta]$, vanishes  on $\left[a_1,\frac{a_1+b_1}{2} \right]\cup \left[\frac{a_2+b_2}{2},b_2\right]$, and satisfies $ 0 <  \eta_{\delta} (s)  < 1 $ in the remaining part of the interval. Let $\phi:\mathbb{R}\to\mathbb{R}^{+}_{0}$ be a standard smooth mollifier with compact support in $[-1,1]$ and $\int_{-\infty}^\infty\phi(s)\,ds=1$.
		
		For each positive $\varepsilon <  \frac{\delta}{4}  $, define $f_{\varepsilon}$ by 
		\begin{align}\label{eq-def-fe}
		f_{\varepsilon}(t)\coloneqq\dint_{-\infty}^\infty \widetilde{f}(t-\varepsilon \eta_{\delta}(t)s)\phi(s)\, ds , \ \  t \in [a_1, b_2]  
		\end{align}
		and observe that $f_{\varepsilon}$ is smooth on $[a_1,b_2]$. Then $f_\varepsilon\equiv \widetilde{f}$ on $\left[a_1,\frac{a_1+b_1}{2} \right]\cup \left[\frac{a_2+b_2}{2},b_2\right]$ and 
		\begin{align}\label{eq-def-fe'}
		f'_{\varepsilon}(t)=\dint_{-\infty}^\infty \widetilde{f}' (t-\varepsilon \eta_{\delta}(t)s)(1 - \varepsilon \eta_\delta'(t) s  ) \phi(s)  \,  ds  \ \forall\, t \in [a_1, b_2] .
		\end{align}
		Moreover, since $ \widetilde f'$ is $C^0$ everywhere and $C^1$ except at $b_1$ and $a_2$, it can be checked that by standard mollification arguments
		\begin{align}\label{eq-def-fe''-1}
		\begin{split}
		f''_{\varepsilon}(t) = & \  \frac{d}{dt} \left(  \dint_{-\infty}^\infty \widetilde{f}' (t-\varepsilon s) \phi(s)  \,  ds , \right) \\
		= & \   \dint_{-\infty}^\infty \widetilde{f}''  (s) \phi_\varepsilon ( t -s)   \,  ds  \ \ \forall\,t \in (b_{1}-\delta,a_{2}+\delta), 
		\end{split}
		\end{align}
		where $ \phi_\varepsilon(t) \coloneqq \frac{1}{\varepsilon} \phi ( \frac{ t}{\varepsilon} )$, and
		\begin{align}\label{eq-def-fe''-2} 
		\begin{split}
		f''_{\varepsilon}(t) = & \ \int_{-\infty}^\infty \widetilde{f}'' (t-\varepsilon \eta_{\delta}(t)s)(1 - \varepsilon \eta_\delta'(t) s  )^2 \phi(s)  \,  ds  \\ 
		& \ - \varepsilon \int_{-\infty}^\infty \widetilde{f}' (t-\varepsilon \eta_{\delta}(t)s)  \eta_\delta'' (t) s   \phi(s)  \,  ds, \ 
		\forall\,t \notin [b_1 -  \frac14 {\delta}, a_2 + \frac14  \delta] . 
		\end{split} 
		\end{align}
		
		We claim that, for sufficiently small $\varepsilon>0$,  the metric $d t^2 + f_\varepsilon (t)^2  g_*$ has positive scalar curvature. 
		To shows this, recall that given any smooth  function $f > 0$, the metric $ dt^2+f(t)^2g_*$ has positive scalar curvature if and only if 
		\begin{align}\label{Rfcond}
		f''(t)<\frac{n-1}{2f(t)}(1-f'(t)^2)\quad\forall\,t.
		\end{align}
		Suggested  by \eqref{Rfcond}, given any positive  $f \in C^1([a_1, b_2])$, we define 
		\begin{align*}
		\Omega[f](t) : =\frac{n-1}{2 {f}(t)}(1- {f}'(t)^2), \quad t\in[a_1,b_2].
		\end{align*}
		Observe that $\Omega[f]\in C^0 ([a_1, b_2])$. Then, on $ [a_1, b_2] \setminus \{ b_1, a_2 \} $,   we have 
		\begin{align*}
		\Omega [ \widetilde f\, ]  > \widetilde f'' 
		\end{align*}
		because $ \Omega [ \widetilde f \,]  > 0 $ on $[b_1, a_2]$, $\widetilde f'' \le 0 $  on $(b_1, a_2)$, and $ g_i$ has positive scalar curvature on $[a_i, b_i] \times \mathbb{S}^{n}$ for $i=1,2$. Moreover, for the same reason, we indeed have
		\begin{align}\label{eq-inf-p}
		3d\coloneqq\inf_{ t \in  [a_1, b_2] \setminus \{ b_1, a_2 \} } \left( \Omega [ \widetilde f\, ]  - \widetilde f'' \right) > 0 .
		\end{align}
		Thus,
		\begin{align*}
		\widetilde{f}'' (t) \leq\Omega[\widetilde{f}\, ] (t) -3d
		\end{align*}
		wherever $\widetilde f''$ is defined. 
		Now we consider $ \Omega [ f_\varepsilon ]$. By \eqref{eq-def-fe} and \eqref{eq-def-fe'},  $ f_\varepsilon \to \widetilde f $ in $ C^1([a_1, b_2])$ as $\varepsilon\to0^{+}$. Hence, $ \Omega[f_\varepsilon] \to \Omega[\widetilde{f}\,]$ in $ C^0 ([ a_1, b_2])$, 
		which shows, for small $ \varepsilon$, 
		\begin{align*}
		\sup_{ t \in [a_1, b_2] } \left| \Omega[\widetilde{f}\,] (t) -\Omega[f_\varepsilon] (t) \right|<d.
		\end{align*} 
		Moreover, by \eqref{eq-def-fe''-1} and \eqref{eq-def-fe''-2}, 
		\begin{align*}
		f''_\varepsilon(t)<\sup_{ | s - t | < \varepsilon }\widetilde f''(s)+d \quad \forall\,t \in[a_{1},b_{2}]
		\end{align*}
		provided $ \varepsilon $ is sufficiently small. 
		Therefore, it follows that, for all $t\in[a_{1},b_{2}]$,
		\begin{align*}
		f''_\varepsilon(t)
		&\leq \sup_{ | s - t | < \varepsilon } \left( \Omega[\widetilde{f}\,](s)-3d \right) +d\\
		&<\Omega[\widetilde{f}\,](t)-d\\
		&<\Omega[f_\varepsilon](t),
		\end{align*}
		where, in the second to last inequality, we  also used the fact that $\Omega[\widetilde{f}\,]$ is uniformly continuous on $[a_1, b_2]$, and hence $ \Omega[\widetilde{f}\,](s)  < \Omega[\widetilde f\,] (t) + d $ for any $ s $ with $ | s - t | < \varepsilon $  provided $ \varepsilon $ is small.
		
		Thus, we have shown that \eqref{Rfcond} holds on $[a_1, b_2]$ with $f$ replaced by $ f_\varepsilon$ for small $\varepsilon $. 
		Hence, the metric $ d t^2 + f_\varepsilon(t) ^2 g_*$ has positive scalar curvature and thus $f\coloneqq f_{\varepsilon}$ satisfies the conclusions of the theorem for small enough $\varepsilon$. Finally, if it is assumed that $ f_i' > 0 $, then $ \widetilde f' > 0 $ which directly implies $ f'=f_\varepsilon' > 0 $ as $ f_\varepsilon \to \widetilde f $ in $C^1([a_1, b_2])$.
	\end{proof}
	
	\begin{remark} \label{rem-g-lemma}
		It follows  from the above proof that, if $ f'_1 (b_1) > f'_2 (a_2)$, the assumptions $  f_1' (b_1) < 1 $ and $ f_2'(a_2) > -1 $  can be weakened to $ f_1' (b_1) \le 1 $ and $ f_2'(a_2) \ge -1$, respectively. This is because, in this case, one can require $ \zeta'(t) < 0 $ on $[b_1, a_2]$. Thus, $ \widehat f '' \le  \max_{[b_1, a_2]} \zeta' < 0   $ on $ (b_1, a_2)$ while $ \Omega [ \widetilde f] \ge 0 $ on $[b_1, a_2]$.  Therefore, \eqref{eq-inf-p} still holds.
	\end{remark}
	
	\begin{prop}\label{prop-extension}
		Consider a metric $ \gamma= ds^2 + f(s)^2 g_* $ on $ [a, b] \times \mathbb{S}^2$, where $ g_*$ is the standard metric on $ \mathbb{S}^2$ and $f > 0 $ is a smooth function on $[a, b]$. Suppose 
		\begin{enumerate}
			\item $\gamma $ has positive scalar curvature;
			\item $ \S_b  : = \{ b \} \times \mathbb{S}^2$ has positive mean curvature; and
			\item   $ \mh (\S_b  ) \ge  0 $. 
		\end{enumerate}
		Then, for any $\me > \mh(\S_b)$, there exists a smooth, rotationally symmetric, asymptotically flat Riemannian $3$-manifold $M$ with boundary $\p M$ and with non-negative scalar curvature such that 
		\begin{enumerate}[(i)]
			\item $M$, outside a compact set, is isometric to a spatial Schwarzschild manifold of mass $m_e$;
			\item  $ \p M $ has a neighborhood $U$  that is 
			isometric to $ \left( \left[ a, \frac{a+b}{2} \right) \times \mathbb{S}^2, \gamma \right)$; and
			\item if  $ f' > 0 $ on $[a, b]$, then $M$ can be constructed such that every rotationally symmetric sphere in $M$ has positive constant mean curvature. 
		\end{enumerate} 
	\end{prop}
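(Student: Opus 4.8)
The plan is to reduce the construction to an application of Lemma \ref{gluing-lemma}, gluing the given collar metric $\gamma = ds^2 + f(s)^2 g_*$ to a spatial Schwarzschild manifold of mass $m_e$. First I would record the explicit rotationally symmetric form of the spatial Schwarzschild manifold of mass $m_e > 0$: in an area-radius-type parametrization it can be written as $dt^2 + F_{m_e}(t)^2 g_*$ on an interval $[t_0, \infty)$, where $F_{m_e}$ is smooth, positive, strictly increasing, satisfies $F_{m_e}' \to 1$ at infinity, has zero scalar curvature, and where $\{t_0\} \times \mathbb{S}^2$ is the horizon, of area $16\pi m_e^2$ and mean curvature $0$. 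For the gluing I do not want to start exactly at the horizon (there $F'_{m_e}$ is too large, in fact $F'_{m_e}(t_0)=1$, and the mean curvature vanishes), so I would instead pick a value $t_1 > t_0$ and use $f_2 \coloneqq F_{m_e}|_{[t_1,\infty)}$ (restricted to a large enough finite interval $[t_1,b_2]$ for the gluing step, then extended back to infinity afterwards). On $[t_1,\infty)$ we have $0 < F'_{m_e} < 1$, $F_{m_e}'' < 0$ coming from positivity (in fact vanishing) of scalar curvature via \eqref{Rfcond}, and $F'_{m_e}$ is monotone.

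The key point is to choose $t_1$ so that hypotheses (ii) and (iii) of Lemma \ref{gluing-lemma} hold with $f_1 \coloneqq f$, $b_1 \coloneqq b$, $f_2 \coloneqq F_{m_e}$, $a_2 \coloneqq t_1$. Condition (ii), $f(b) < F_{m_e}(t_1)$, is arranged by taking $t_1$ large, since $F_{m_e}$ is unbounded. For condition (iii) I need $1 > f'(b) > 0$ and $f'(b) \ge F_{m_e}'(t_1) > -1$; the first part is exactly hypothesis (2) of the Proposition together with \eqref{Rfcond} applied at $b$ (positive mean curvature of $\S_b$ in the rotationally symmetric setting is $f'(b) > 0$, and positive scalar curvature at $b$ forces $f'(b) < 1$), and the second part holds because as $t_1 \to \infty$, $F'_{m_e}(t_1) \to 1$, so I cannot simply take $t_1$ too large — instead I must locate $t_1$ in the window where $F'_{m_e}(t_1)$ is small enough. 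This is where hypothesis (3), $\mh(\S_b) \ge 0$, enters and where the mass condition $m_e > \mh(\S_b)$ is used: a direct computation shows that for the Schwarzschild profile the Hawking mass of $\{t\}\times\mathbb{S}^2$ is identically $m_e$, while along the given collar $\mh(\S_b) = \sqrt{|\S_b|_\gamma / 16\pi}\,(1 - f'(b)^2)/2 \le \mh$-type expressions; comparing areas, $|\S_b|_\gamma = 4\pi f(b)^2$ and the horizon area $16\pi m_e^2$, and using $m_e > \mh(\S_b)$, one checks that there is a value of $t_1$ at which $F_{m_e}(t_1) > f(b)$ and simultaneously $F'_{m_e}(t_1) \le f'(b)$. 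The inequality $m_e > \mh(\S_b)$ is exactly what makes this window nonempty; I expect this balancing of the two constraints on $t_1$ to be the main obstacle, and it should be carried out by writing the Hawking mass and $F'_{m_e}$ as explicit functions of the area radius and checking the relevant inequality reduces to $m_e > \mh(\S_b)$.

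Once $t_1$ is fixed, Lemma \ref{gluing-lemma} (after the translation \eqref{eq-translation}, which is possible precisely by (ii) and (iii)) produces a smooth positive $f: [a, b_2] \to \R^+$ with $f \equiv f_1$ near $a$, $f \equiv f_2 = F_{m_e}$ near $b_2$, such that $\gamma \coloneqq dt^2 + f(t)^2 g_*$ has positive scalar curvature on $[a, b_2] \times \mathbb{S}^2$. I then define $M$ by attaching to this the remaining infinite Schwarzschild end $[b_2, \infty) \times \mathbb{S}^2$ with profile $F_{m_e}$; since $f \equiv F_{m_e}$ near $b_2$ the gluing is smooth, and the result is a smooth, rotationally symmetric, asymptotically flat manifold with non-negative scalar curvature (positive on the compact piece, zero on the Schwarzschild end). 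Conclusion (i) holds by construction, and conclusion (ii) holds because $f \equiv f_1 = f$ on $[a, \frac{a+b}{2})$, which is the neighborhood $U$. For conclusion (iii): if $f_1' = f' > 0$ on $[a,b]$, then since also $F_{m_e}' > 0$ on $[t_1, \infty)$, the last sentence of Lemma \ref{gluing-lemma} gives $f' > 0$ everywhere; in the rotationally symmetric setting $f' > 0$ is equivalent to every sphere $\{t\}\times\mathbb{S}^2$ having positive mean curvature, and constancy of the mean curvature on each such sphere is automatic from rotational symmetry. This completes the plan; the only genuinely delicate step is the choice of $t_1$ described above.
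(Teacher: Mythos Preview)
There is a genuine gap. Your plan is to set $f_2 = F_{m_e}$ and invoke Lemma \ref{gluing-lemma} directly, but hypothesis (i) of that lemma requires \emph{positive} scalar curvature for $\gamma_2$, while the Schwarzschild metric has \emph{zero} scalar curvature. Concretely, in the proof of Lemma \ref{gluing-lemma} the quantity $3d = \inf(\Omega[\widetilde f\,] - \widetilde f'')$ in \eqref{eq-inf-p} would be $0$ on the Schwarzschild side, and the mollification argument breaks down. The paper repairs this by first ``bending'' a small piece of the Schwarzschild profile near the chosen point $s_{m_e}$: one replaces $\u_{m_e}(s)$ by $\u_{m_e}(\sigma(s))$ with $\sigma'(s) = 1 + e^{-1/(s-s_{m_e})^2}$ for $s \in [s_{m_e}-\delta, s_{m_e}]$ and $\sigma(s)=s$ for $s \ge s_{m_e}$, so that the metric has strictly positive scalar curvature on $[s_{m_e}-\delta, s_{m_e})$ while remaining exactly Schwarzschild for $s \ge s_{m_e}$. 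Only after this does Lemma \ref{gluing-lemma} apply, with $f_2 = \u_{m_e}\circ\sigma$ on a short interval.

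There is a second, related gap. You claim that positive scalar curvature at $b$ forces $f'(b) < 1$; it does not (\eqref{Rfcond} only bounds $f''$, not $f'$). What you actually get is $f'(b) \le 1$ from hypothesis (3), $\mh(\S_b) \ge 0$, since $\mh(\S_b) = \tfrac{1}{2} f(b)(1 - f'(b)^2)$. In the borderline case $\mh(\S_b) = 0$, i.e.\ $f'(b)=1$, condition (iii) of Lemma \ref{gluing-lemma} fails as stated. The paper handles this via Remark \ref{rem-g-lemma}, which relaxes $f_1'(b_1)<1$ to $f_1'(b_1)\le 1$ provided the \emph{strict} inequality $f_1'(b_1) > f_2'(a_2)$ holds; the bending step above is precisely what produces that strict inequality (see \eqref{eq-prop-cond-2}). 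Finally, your description of the Schwarzschild profile is inverted: by \eqref{eq-lum-p} one has $\u_{m_e}'' = m_e/\u_{m_e}^2 > 0$, and $\u_{m_e}'$ increases monotonically from $0$ at the horizon to $1$ at infinity (not the other way around). This does not affect the existence of the window you describe, but it means the correct choice of $s_{m_e}$ is the one with $\u_{m_e}'(s_{m_e}) = f'(b)$ when $\mh(\S_b)>0$, which then gives $\u_{m_e}(s_{m_e}) > f(b)$ directly from $m_e > \mh(\S_b)$.
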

	
	To prove this proposition, we recall that a spatial Schwarzschild manifold $ \left(M^S_m, \gamma_m \right) $ with mass $m > 0$ takes the form of 
	\begin{align} 
	\left(M^S_m, \gamma_m \right)  = \left( (2m , \infty) \times \mathbb{S}^2, \frac{1}{1 - \frac{2 m}{r} } d r^2 + r^2 g_*  \right).
	\end{align}
	Setting $s \coloneqq \int_{2m}^r \left( 1 - \frac{2m}{ t } \right)^{-\frac12} d t$, 
	we  can slightly extend $(M^{S}_{m},\gamma_{m})$ to  
	\begin{align}  \label{eq-S-metric} 
	\left(M^S_m, \gamma_m \right)  =  \left( [0, \infty) \times \mathbb{S}^2, ds^2 + \u_m^2 (s) g_* \right) , 
	\end{align}
	where $\u_m:[0,\infty)\to[2m,\infty)$ satisfies $\u_{m}(0)=2m$,
	\begin{align}\label{eq-lum-p}
	\u_{m}'(s)=\sqrt{1-\dfrac{2 m}{\u_{m}(s)}} \ \ 
	\mathrm{and}
	\ \ 
	\u_{m}''(s)=\dfrac{m}{\u_{m}(s)^2} . 
	\end{align}
	
	\begin{proof}[Proof of Proposition \ref{prop-extension}]
		The Hawking mass of $\S_b$ is given by
		\begin{align}\label{hawking-1}
		\mh(\S_b)  = \dfrac{f (b) }{2}\( 1- f '(b)^2 \).     
		\end{align}
		For simplicity, 
		we {set} $m_*\coloneqq\mh(\S_b)$, $r_*\coloneqq f (b)$ and $w_* \coloneqq f '(b)$, so \eqref{hawking-1} becomes
		$ m_*=\dfrac{r_*}{2}(1-w_*^2) $.
		By assumption, $ w_* > 0 $ and $ m_* \ge 0 $. The latter  implies $ w_*  \le   1 $. 
		
		Given any $\me>m_*$, we claim that there there exists $s_{\me}>0$ such that 
		\begin{align}\label{eq-ume-l}
		\u_{\me}'(s_{\me}) &\le w_*, \\\label{eq-umep-g}
		\u_{\me}(s_{\me})&>  r_* . 
		\end{align}
		In case $ m_* > 0 $, i.e., $ 0 < w_* < 1$, this is easily seen by choosing $ s_{\me}>0 $ so that 
		$ \u_{\me}' (s_{\me}) =  w_* $. 
		Then  \eqref{eq-umep-g} follows from \eqref{eq-lum-p} and the fact that $ m_e > m_*$.
		If $ m_* = 0 $, i.e., $ w_* = 1 $, then \eqref{eq-ume-l} holds (with a strict inequality) for any $ s_{\me} > 0 $. In this case, \eqref{eq-umep-g} holds for any sufficiently large  $ s_{\me} $.
		
		To proceed, we fix $ s_{\me}$. Let $ \delta \in (0, s_{\me}) $ be a small  constant to be chosen later. 
		We now slightly bend  a small piece of the Schwarzschild manifold 
		$\left(M^S_{\me}, \gamma_{\me} \right)$,
		near $s=s_{m_e}$, so that it has positive scalar curvature and can be smoothly glued here to 
		$\left( \left[a, b \right]  \times \mathbb{S}^2 , \gamma \right) $ using Lemma \ref{gluing-lemma}.
		
		Following Lemma 2.3 in \cite{M-S}, we consider a smooth function $\sigma:[s_{m_e} - \delta, \infty)\to\mathbb{R}$ defined by $\sigma (s) \coloneqq s$ for $s \ge s_{m_e} $, and indirectly via
		\begin{align*}
		\sigma'(s)\coloneqq1+e^{-1/(s-s_{\me})^2}  \text{ for } s_{m_e} - \delta \le s \le s_{m_e} . 
		\end{align*}
		If $ \delta $ is small enough then 
		$\sigma (s) > 0 $  and the metric
		\begin{align*}
		\gamma_{e} \coloneqq  ds^2 + \u_{m_e} ( \sigma (s) )^2 g_*
		\end{align*}  
		has positive scalar curvature  on $[s_{\me}-\delta,s_{\me}) \times \mathbb{S}^2$. Note that $\gamma_e$  is identically the Schwarzschild metric with mass $m_e$ on $[s_{m_e} , \infty) \times \mathbb{S}^2 $ and $ \frac{d}{d s}  \u_{m_e} ( \sigma (s) ) > 0 $,  $\forall\,s \ge s_{\me} - \delta $. 
		
		We now want to compare $ \u_{m_e}  ( \sigma (s) ) |_{s =  s_{m_e} - \delta } $ and $ \frac{d}{ds} \u_{m_e}  ( \sigma (s) ) |_{s =  s_{m_e} - \delta } $, with $r_*$ and $ w_*$, respectively. By \eqref{eq-umep-g}, we have 
		\begin{align}\label{eq-prop-cond-1}
		\u_{m_e}  ( \sigma (  s ) )  |_{s =  s_{m_e} - \delta }   > r_* 
		\end{align}
		if $ \delta$ is sufficiently small. By \eqref{eq-lum-p}, one finds
		\begin{align*}
		\frac{d^2}{ds^2} \u_{\me}(\sigma(s))
		= & \  {m_e} \u_{m_e} (\sigma (s) )^{-2}    \sigma'(s)^2+\u_{\me}'(\sigma(s))\sigma''(s) . 
		\end{align*}
		Hence $\frac{d^2}{ds^2} \u_{\me}(\sigma(s)) |_{ s = s_{m_e}} = m_e \u_{m_e} ( s_{m_e} )^{-2}  > 0 $ since $ m_e > 0 $. Thus,  shrinking $\delta$ if necessary, we have $ \frac{d^2}{ds^2} \u_{\me}(\sigma(s))  > 0 $ on $ [s_{m_e} - \delta, s_{m_e}]$. Combined with \eqref{eq-ume-l}, this implies 
		\begin{align}\label{eq-prop-cond-2}
		\frac{d}{ds} \u_{\me}(\sigma(s))\bigg\vert_{s=s_{\me}-\delta} <  \u_{\me}'(s_{\me}) \le w_*   . 
		\end{align}
		
		Now, by \eqref{eq-prop-cond-1}, \eqref{eq-prop-cond-2} and as $\gamma_e$ has positive scalar curvature on $ [s_{m_e} - \delta, s_{m_e}) \times \mathbb{S}^2$, we can apply Lemma \ref{gluing-lemma} (and Remark \ref{rem-g-lemma}) to smoothly glue $ \gamma$ and $ \gamma_e$ together, with $f$ playing the role of $ f_1 $ on $[a_1, b_1]\coloneqq[a, b]$, and $ \u_{m_e}\circ \sigma$ playing the role of $f_2$ on $ [a_2, b_2]\coloneqq [ s_{m_e } - \delta , s_{m_e } - \frac{ \delta}{2}]$. The resulting manifold satisfies all properties we require.
	\end{proof}
	
	\begin{remark}
		We would like to comment on the size of the ``neck" that is needed in the above proof to connect  
		$\left( \left[a, b \right]  \times \mathbb{S}^2 , \gamma \right) $ to  $\left(M^S_{\me}, \gamma_{\me} \right)$. 
		Precisely, this means that we want to estimate $ l \coloneqq  a_2 - b_1 $ after having  set 
		$[a_1, b_1] = [a, b]$, $ [a_2, b_2] =  [ s_{m_e } - \delta , s_{m_e } - \frac{ \delta}{2}]$, 
		and  having  translated the intervals  so that  \eqref{eq-translation} holds.
		By \eqref{eq-translation} and \eqref{eq-prop-cond-2},  $l$ satisfies 
		\begin{equation*} 
		\frac{  \u_{\me}(\sigma(s_{\me}-\delta)) - r_* }{ \frac{d}{ds} \u_{\me}(\sigma(s)) \vert_{s=s_{\me}-\delta}   }
		> l >  \frac{  \u_{\me}(\sigma(s_{\me}-\delta)) - r_* }{w_*}. 
		\end{equation*}
		Suppose $ \mh(\S_b)  > 0 $, by construction 
		we have $  \u_{\me}' (s_{\me}) =  w_* $ and consequently 
		\begin{equation*}
		\begin{split}
		& \  \lim_{\delta \to 0}  \frac{  \u_{\me}(\sigma(s_{\me}-\delta)) - r_* }{ \frac{d}{ds} \u_{\me}(\sigma(s)) \vert_{s=s_{\me}-\delta}   } \\
		=  & \  \frac{  \u_{\me}( s_{\me} ) - r_* }{  \u'_{\me}( s_{\me}  ) }   =   \frac{  \u_{\me}( s_{\me} ) - r_*  }{w_*}  \\
		= & \  \lim_{\delta \to 0} \frac{  \u_{\me}(\sigma(s_{\me}-\delta)) - r_* }{w_*}.  
		\end{split}
		\end{equation*}
		Thus, by choosing $ \delta $  small,  we see that $l$ 
		is arbitrarily close to $ \displaystyle L \coloneqq  \frac{  \u_{\me}( s_{\me} ) - r_* }{w_*}$. 
		On the other hand, we have 
		\begin{equation*}
		L =  \frac{ \left(  \frac{\me}{m_*}   - 1 \right) r_* }{w_*} ,
		\end{equation*} 
		which follows from the fact   $ m_*=\dfrac{r_*}{2}(1-w_*^2)  $,
		$ \me = \frac12  \u_{\me}( s_{\me} )  \left( 1 -  \u'_{\me}( s_{\me} )^2 \right)  $
		and  $  \u_{\me}' (s_{\me}) =  w_* $.
		Therefore, we conclude that $ l \to 0$ as $ \me \to \mh(\S_b) $. 
		
		When $ \mh(\S_b)  = 0 $, i.e.  $ w_* = 1$,  $l$ can always  be chosen to lie in 
		\begin{equation*} 
		\left( \frac{ \u_{\me}( s_{\me} ) - r_*  }{  \u'_{\me}( s_{\me}  )  } ,    \u_{\me}( s_{\me} ) - r_* \right) . 
		\end{equation*}
		In this case, by choosing $ s_{\me} $ such that $   \u_{\me}( s_{\me} )   $ is  arbitrarily close to $ r_*$, 
		we see that $l$ can be taken to approach $0$ for any given $ \me > \mh(\S_b) $.
	\end{remark}

	\section{Asymptotically flat extensions} \label{sec-main-theorem}
	Throughout  this section, let $(\S\simeq\mathbb{S}^{2}, g, H)$ be a  triple of Bartnik data with $K(g) > 0 $  and  constant mean curvature $ H=H_o > 0 $. Let $ r_o $ be the area radius of $g$, i.e., $ | \S|_g = 4 \pi r_o^2 $. 
	We always let  $\{ g(t) \}_{0 \leq t \leq 1}$ be a smooth path of metrics  on $\Sigma$ such that 
	\begin{enumerate}[(i)]
		\item if $g$ is a round metric,  $\{ g(t) \}_{0 \leq t \leq 1}$ is  the constant path with  $g(t) = g$, 
		$\forall\,t\in[0,1]$;  and 
		\item if $g$ is not a round metric,   $\{ g(t) \}_{0 \leq t \leq 1}$   is a  path of metrics  with positive Gaussian curvature 
		satisfying 
		$g(0)=g$, $g(1)$ is  a round  metric,   and 
		$$ \tr_{g(t)} g'(t)=0 , \  \forall\,t\in[0,1] .$$  
		(Existence of such a  $\{ g(t) \}_{0 \leq t \leq 1}$ is given  by Mantoulidis and Schoen's proof of 
		\cite[Lemma 1.2]{M-S}.)
	\end{enumerate}
	As in \cite[Section 2]{M-X}, we  let $\alpha $ and $\beta$ be   two  constants 
	determined by such a path  via
	\begin{equation}
	\begin{split}\label{eq-def-alpha-beta}
	\a\coloneqq\dfrac{1}{4} \max \limits_{t\in[0,1]}  \max\limits_{\S}  |g'(t)|^2_{g(t)}, \\ 
	\b \coloneqq r_o^2   \min \limits_{t\in[0,1]}   \min\limits_{\S} K(g(t)).
	\end{split}
	\end{equation}
	Clearly, 
	\begin{enumerate}
		\item[(a)] $\beta = 1 $ and $\alpha = 0 $, if $g$ is a round metric; and 
		\item[(b)] $ 0 < \beta < 1  $ and $ \alpha > 0$, if $g$ is not a round metric. 
	\end{enumerate}
	In assertion (b), one uses the Gau{\ss}-Bonnet Theorem  and the fact $|\Sigma|_{g(t) } = 4\pi r_o^2 $, $\forall \ t$.
	
	The following theorem illustrates how the gluing tools in the previous section and the collar extension
	in \cite{M-X} are combined to produce asymptotically flat extensions with suitable CMC  Bartnik data.
	
	\begin{thm} \label{thm-extension}
		Let $(\S\simeq\mathbb{S}^{2}, g, H)$ be a triple of Bartnik data where  
		$g$ has positive Gaussian curvature and  $H = H_o $ is a positive constant. 
		Let $r_o$ be the area radius of $g$,  i.e., $\vert\S\vert_{g}=4\pi r_o^{2}$. 
		Let $\{ g(t) \}_{0 \leq t \leq 1}$ be a path of metrics given in (i) or (ii) above.
		Let $ \alpha \ge 0  $ and  $ 0 < \beta \le 1 $ be the constants 
		defined in  \eqref{eq-def-alpha-beta} for  this path $\{ g(t) \}_{0 \leq t \leq 1}$. 
		Suppose the condition 
		\begin{align}\label{eq-thm-W-cond}
		\dfrac{1}{4}H_o^2 r_o^2 < \dfrac{\beta}{1+\a} 
		\end{align}
		holds. 
		Given any $ m \in ( - \infty, \frac12 r_o) $ satisfying 
		\begin{align}\label{eq-thm-choice-m}
		\dfrac{1}{4} H_o^2 r_o^2  <  \dfrac{\b}{1+\a}\( 1 -\dfrac{2  m }{r_o} \) ,
		\end{align}
		let $k > 0 $ be the  constant given by 
		\begin{align}  \label{eq-thm-k-def}
		k\coloneqq\dfrac{H_o r_o}{2} \(1-\frac{2m}{r_o}\)^{-\frac{1}{2}} . 
		\end{align} 
		Define
		\begin{align} \label{eq-m-star}
		m_*  \coloneqq  
		\left\{ 
		\begin{array}{lc}
		\displaystyle 
		\frac12   r_o \left[  \frac{  \frac14 H_o^2 r_o^2    \alpha} {  \left( \beta -   \frac14 H_o^2 r_o^2   \alpha \right)  - k^2  } \right]^\frac12  
		(1 - k^2) +  \mh  (\S)  , &   \  \mathrm{if} \ m < 0  , \\
		\displaystyle  
		\frac12  r_o \left[  \frac{ \alpha k^2 } { \beta  - \left( 1 +  \alpha \right) k^2 } \right]^\frac12 ( 1 - k^2)  
		+ \mh  (\S)  ,  &  \  \mathrm{if} \ m \ge 0 ,
		\end{array}
		\right.
		\end{align}
		where $ \mh (\S) \coloneqq \mh (\S, g, H_o)$. Then, for any $ \me > m_*$, there exists a smooth, asymptotically flat Riemannian $3$-manifold $(M,\gamma)$ with boundary $\p M$ and non-negative scalar curvature such that
		\begin{enumerate}[(i)]
			\item $\p M$ is isometric to $(\S,g)$ and has constant mean curvature $H_o$;
			\item $M$, outside a compact set,   is isometric to a spatial Schwarzschild manifold $M^S_{\me}$ of mass $ \me $; and 
			\item $M$ is foliated by mean convex $2$-spheres which  eventually coincide with the rotationally symmetric $2$-spheres  
			in $M^S_{\me}$. 
		\end{enumerate}
	\end{thm}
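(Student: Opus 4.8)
The plan is to combine the collar extension of Miao and Xie \cite{M-X} with the gluing tool in Proposition \ref{prop-extension}. First I would recall the structure of the Miao--Xie collar: given the path $\{g(t)\}$ and the constant $k$ defined in \eqref{eq-thm-k-def}, one builds a metric $\gamma_c = (\text{warping in } t) \, dt^2 + \text{something} \cdot g(t) + \ldots$ on $[0,1]\times\S$ --- more precisely a metric of the form studied in \cite{M-X} whose inner boundary $\S_0 = \{0\}\times\S$ is isometric to $(\S,g)$ with constant mean curvature $H_o$, which has positive scalar curvature (this is where \eqref{eq-thm-W-cond} and \eqref{eq-thm-choice-m} enter, via the constants $\alpha,\beta$), and whose outer boundary $\S_1 = \{1\}\times\S$ is a round sphere. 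The key quantitative output of \cite{M-X} that I would quote is the Hawking mass comparison along the collar: $\mh(\S_1)$ is controlled by $\mh(\S_0) = \mh(\S)$ plus an explicit error depending on $\alpha$, $\beta$, $k$ (equivalently $H_o r_o$), which is exactly what produces the quantity $m_*$ in \eqref{eq-m-star}, with the two cases $m<0$ and $m\geq 0$ reflecting two slightly different estimates in \cite{M-X}.

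Next I would arrange that near $\S_1$ the collar is rotationally symmetric. Since $g(1)$ is round, after the warping the metric near $\{1\}\times\S$ can be taken of the form $ds^2 + f(s)^2 g_*$ on a short interval $[a,b]\times\mathbb{S}^2$, with $f(b)$ the area radius of $\S_1$ and $f'(b)$ determined by the mean curvature of $\S_1$ in the collar; by construction of the Miao--Xie collar one has $f'>0$ and $f'(b)<1$ (equivalently $\mh(\S_b)=\mh(\S_1)\ge 0$, in fact close to but below $\mh(\S)$ plus the controlled error), so hypotheses (1), (2), (3) of Proposition \ref{prop-extension} hold for this rotationally symmetric piece. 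Then for any $\me > m_*$, since $m_*$ is chosen (via the \cite{M-X} estimate) to be an upper bound for $\mh(\S_1)$ in the relevant sense, we have $\me > \mh(\S_b)$, so Proposition \ref{prop-extension} applies: it yields a rotationally symmetric asymptotically flat manifold, with nonnegative scalar curvature, Schwarzschild of mass $\me$ outside a compact set, whose boundary collar of the form $[a,\frac{a+b}{2})\times\mathbb{S}^2$ is isometric to the outer part of our collar, and whose rotationally symmetric spheres all have positive constant mean curvature.

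Finally I would glue: attach the Miao--Xie collar $([0,1]\times\S,\gamma_c)$ to the manifold $M$ produced by Proposition \ref{prop-extension} along the common round region near $\S_1$ --- this is a genuine isometric identification, not a gluing lemma application, because Proposition \ref{prop-extension}(ii) gives that $\partial M$ has a neighborhood isometric to $[a,\frac{a+b}{2})\times\mathbb{S}^2$ with exactly the metric $ds^2+f(s)^2g_*$ coming from the collar, so the two pieces match smoothly. The resulting $(M',\gamma')$ has boundary $\S_0$ isometric to $(\S,g)$ with mean curvature $H_o$, proving (i); it is Schwarzschild of mass $\me$ outside a compact set, proving (ii); and for (iii) one notes the collar in \cite{M-X} is itself foliated by the constant-$t$ spheres, which one checks are mean convex (again using \eqref{eq-thm-W-cond}), while the rotationally symmetric spheres of $M$ are mean convex by Proposition \ref{prop-extension}(iii); these two foliations agree on the overlap, so they patch to a global mean convex foliation eventually coinciding with the Schwarzschild coordinate spheres. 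The main obstacle I expect is bookkeeping rather than conceptual: verifying that the constant $m_*$ in \eqref{eq-m-star} --- with its somewhat delicate dependence on the sign of $m$, on $k$, and on $\alpha,\beta$ --- is exactly what the Hawking-mass estimate along the Miao--Xie collar delivers, and that the free parameter $m<\frac12 r_o$ together with \eqref{eq-thm-choice-m} can be optimized so that $m_*$ is minimized, giving the clean bound claimed. Checking mean convexity of the collar foliation uniformly, and that $f'(b)$ is strictly between $0$ and $1$ so Remark \ref{rem-g-lemma} is not even needed here, is the other technical point to nail down.
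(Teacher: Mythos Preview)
Your overall strategy matches the paper's, but there is a genuine gap in the step where you ``arrange that near $\S_1$ the collar is rotationally symmetric.'' The Miao--Xie collar metric has the form $\gamma = A_o^2\,dt^2 + r_o^{-2}\,u_m(A_o k t)^2\,g(t)$, so the slice $\{t\}\times\S$ carries (a constant multiple of) $g(t)$, not $g(1)$. Hence the collar is rotationally symmetric only \emph{at} $t=1$, not on any interval $[1-\theta,1]\times\S$, unless the path itself satisfies $g(t)=g(1)$ there. The path in (ii) has no such property, and modifying it to achieve this changes the constant $\alpha$ in \eqref{eq-def-alpha-beta}, hence changes $m_*$. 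The paper handles this by first proving the theorem under the extra assumption $g(t)=g(1)$ on $[1-\theta,1]$ (so that Proposition \ref{prop-extension} applies exactly as you describe), and then removing this assumption via a reparametrization $g_\theta(t)\coloneqq g(\xi_\theta(t))$ with $\xi_\theta\equiv 1$ on $[1-\theta,1]$ and $0\le\xi_\theta'\le(1-2\theta)^{-1}$; one checks $\beta_\theta=\beta$ while $\alpha_\theta\to\alpha$ as $\theta\to 0$, so the associated threshold $m_{\theta*}\to m_*$, and any fixed $m_e>m_*$ eventually satisfies $m_e>m_{\theta*}$.

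A second, smaller gap: when $g$ is already round the path is constant, $\alpha=0$, and the Miao--Xie collar degenerates (the constant $A_o$ in the formulas above vanishes). The paper treats this case separately: here $m_*=\mh(\S)$ and $(\S,g,H_o)$ is itself boundary data of a Schwarzschild slice, so one takes a piece of $(M^S_{m_*},\gamma_{m_*})$, bends it slightly on a short interval $(s_1,s_1+\delta]$ via a function $\tau$ with $\tau'(s)=1-e^{-1/(s-s_1)^2}$ to make the scalar curvature strictly positive there, and then applies Proposition \ref{prop-extension} directly.
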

	
	\begin{proof}
		We note that  \eqref{eq-thm-W-cond} implies 
		$$
		\frac{1}{16 \pi} \int_\S H_o^2 d \sigma < \frac{\beta}{1 + \alpha} \le 1 ; 
		$$
		that is  $ \mh (\S ) > 0 $ by hypothesis. 
		
		We first consider the case that $g$ is not a round metric, i.e. $ \alpha > 0$. 
		In this case,  we will first prove the theorem  under  the  additional assumption that 
		\begin{align} \label{eq-round-near-end}
		g (t) = g(1), \ \forall\,t  \in  [1-\theta, 1] 
		\end{align}
		for some $\theta \in (0, \frac13) $.  Such a  condition is imposed so that later  we can directly apply 
		Proposition \ref{prop-extension}. 
		
		Now we describe the collar extension produced in \cite[Proposition 2.1]{M-X}. 
		Given the constant  $m \in ( - \infty,  \frac12 r_o  )$, consider  part of the spatial Schwarzschild metric  
		\begin{align*}
		\gamma_m = \frac{1}{1 - \frac{2 m}{r} } d r^2 + r^2 g_* 
		\end{align*}
		defined on $[r_o , \infty) \times \mathbb{S}^2$. 
		By a change of variable $s \coloneqq \int_{r_o}^r \left( 1 - \frac{2m}{t} \right)^{-\frac12} d t$, 
		one can write
		\begin{align*}
		\gamma_m = ds^2 + u_m^2 (s) g_* ,
		\end{align*}
		where $ s \in [0, \infty)$ and  $ u_m  $ is the inverse function of $ s = s (r)$. 
		(Note that, in the case of $m \ge 0 $,  $u_m$ is related to $ \u_m $ in \eqref{eq-S-metric} by 
		$  u_m (t)  \coloneqq  \u_m ( s_0 + t) $,  where $ s_0 > 0 $ is given  by $ \u_m (s_0 ) = r_o $.)
		Next, by \eqref{eq-thm-W-cond} --   \eqref{eq-thm-k-def}, we have 
		\begin{align}
		\left\{ 
		\begin{array}{ll}
		\displaystyle 
		\beta - \left[ 1 + \left( 1 - \frac{2m}{r_o} \right) \alpha \right] k^2 > 0 ,  & \ \mathrm{if} \ m < 0 
		\\ 
		\ & \ \\ 
		\displaystyle 
		\beta - ( 1 + \alpha) k^2 >  0 , &  \ \mathrm{if} \ m \ge 0 . 
		\end{array}
		\right.
		\end{align}
		Therefore, we can define a  constant $A_o > 0 $ by 
		\begin{align*}
		A_o \coloneqq 
		\left\{ 
		\begin{array}{lc}
		\displaystyle 
		r_o \left[  \frac{ \alpha} { \beta - \left[ 1 + \left( 1 - \frac{2m}{r_o} \right) \alpha \right] k^2 } \right]^\frac12,  & 
		\ \mathrm{if} \ m < 0 \\ 
		\displaystyle 
		r_o \left[ \frac{ \alpha} { \beta - ( 1 + \alpha) k^2} \right]^\frac12 , &  \ \mathrm{if} \ m \ge 0 . 
		\end{array}
		\right.
		\end{align*}
		Applying Proposition 2.1 of \cite{M-X} (and the subsequent Remark 2.1), 
		we know  the metric 
		\begin{align*}
		\gamma\coloneqq A_o^2 dt^2+ \dfrac{u_m(A_o kt)^2}{r_o^2} g(t)
		\end{align*}
		defined on $N \coloneqq [0,1] \times \S$
		satisfies:
		\begin{enumerate}[(i)]
			\item $R(\gamma)>0$, where $ R(\gamma)$ is  the scalar curvature of $\gamma$; 
			\item the induced metric on $\S_0\coloneqq\{0\} \times \S$ by $\gamma$ is $g$;
			\item the mean curvature  $H(0)$ of $\S_0$ is  $H(0)=H_o$;
			\item $\S_t \coloneqq\{ t  \} \times \mathbb{S}^2$  has positive constant mean curvature $\forall\,t\in[0,1]$; and 
			\item the Hawking mass of $\S_1 =\{ 1 \} \times \S$ is given by 
			\begin{align*}
			\mh(\S_1)&=\dfrac{1}{2}[u_m(A_o k )-r_o ](1-k^2)+\mh(\S)  .
			\end{align*}
		\end{enumerate}
		Moreover, as in \cite[Section 3]{M-X},  one can estimate  $ \mh (\S_1)$  by 
		\begin{align} \label{eq-mh-est}
		\mh(\S_1) \le & \ 
		\left\{ 
		\begin{array}{ll}
		\frac{1}{4}  H_o r_o A_o (1-k^2)+\mh(\S)  & \ \mathrm{if} \ m < 0 \\
		\ & \ \\
		\frac{1}{2} A_o k (1-k^2)+\mh(\S)  & \ \mathrm{if} \ m \ge  0 .
		\end{array}
		\right. 
		\end{align}
		Here 
		\begin{align*}
		\frac{1}{4}  H_o r_o A_o = \frac12   r_o \left[  \frac{  \frac14 H_o^2 r_o^2    \alpha} {  \left( \beta -   \frac14 H_o^2 r_o^2   \alpha \right)  - k^2  } \right]^\frac12 \ \mathrm{if} \ m < 0 ,
		\end{align*}
		and
		\begin{align*}
		\frac{1}{2} A_o k   = \frac12  r_o \left[  \frac{ \alpha k^2 } { \beta  - \left( 1 +  \alpha \right) k^2 } \right]^\frac12 \ \mathrm{if} \ m \ge  0 . 
		\end{align*}
		In other words, \eqref{eq-mh-est} is simply to assert 
		$$ \mh (\Sigma_1) \le m_* . $$
		By \eqref{eq-thm-choice-m} and \eqref{eq-thm-k-def}, we have
		$ 
		k^2 < \frac{\beta}{1 + \alpha} < 1 .
		$ 
		Therefore, 
		\begin{align*}
		0 <  \mh(\S) <  \mh(\S_1) \le m_*. 
		\end{align*}
		Upon a change of variable $ s \coloneqq A_o t $,  $(N, \gamma)$ becomes 
		\begin{align*}
		\left( [ 0, A_o ] \times \S ,  ds^2 +  \frac{f(s)^2}{  r_o^{2} } g ( A_o^{-1} s )  \right),
		\end{align*}
		where $ f(s) =  u_m(ks)$ and $  f'(s) > 0 $. On $[ (1-\theta) A_o , A_o ] \times \S$, by \eqref{eq-round-near-end}, 
		\begin{align*}
		\gamma = ds^2 + f(s)^2 g_*
		\end{align*}
		for some fixed round metric $g_*$ of area $4 \pi$. The theorem now follows readily from Proposition \ref{prop-extension}. 
		
		Next, we show that the theorem still holds when condition \eqref{eq-round-near-end} is  removed. 
		The idea is to simply  approximate $ \{ g(t) \}_{0 \le t \le 1}$ by a path satisfying \eqref{eq-round-near-end}.
		Given any small $\theta \in (0,\frac13)$,  let $ \xi_\theta (t) \ge 0  $ be a smooth function on $[0,1]$ such that
		\begin{equation} \label{eq-xi-theta}
		\left\{
		\begin{split}
		\ & \xi_\theta(t) = \frac{t}{1 - 2 \theta}  ,  \ \forall \ t \in [0, 1 - 3 \theta], \\
		\ &  \xi_\theta (t) = 1,  \ \forall \ t \in [ 1 - \theta, 1], \\ 
		\ &  0 \le \xi_\theta'(t) \le \frac{1}{1 - 2 \theta}  \ \forall  \ t \in [0,1] .
		\end{split}
		\right.
		\end{equation}
		(For instance, such an  $\xi_\theta (t)$ can be obtained by a usual mollification of the piecewise smooth function 
		that equals $ \frac{t}{1- 2 \theta } $ on $[0, 1 - 2 \theta] $ and equals $1$ on $[1 - 2 \theta, 1]$.)
		Consider the reparametrized path 
		$$ g_\theta (t) \coloneqq  g ( \xi_\theta (t) ) , $$
		which satisfies $ g_\theta (t) = g (1) $ for $ t \in [1 -\theta, 1]$.
		Let $ \alpha_\theta$ and $ \beta_\theta$ be the corresponding constants defined in \eqref{eq-def-alpha-beta} with 
		$ g(t)$ replaced by  $ g_\theta (t) $. 
		Clearly, 
		\begin{equation} \label{eq-b-theta}
		\beta_\theta = \beta . 
		\end{equation}
		We claim  
		\begin{equation} \label{eq-a-limit}
		\lim_{\theta \to 0 } \alpha_\theta = \alpha .
		\end{equation}
		To see this,  note that 
		$$ | g_\theta ' (t) |^2_{ g_\theta (t)}  =  ( \xi_\theta' (t) )^2    | g' (\xi_\theta  (t) )  |^2_{g (\xi_\theta (t) ) } , \ \forall \ t \in [0,1] . $$
		Hence, by \eqref{eq-xi-theta},
		$$   | g_\theta ' (t) |^2_{ g_\theta (t)}  =   (1 - 2 \theta )^{-2}   | g' ( \xi_\theta (t) )  |^2_{g ( \xi_\theta(t) )  } , \ \forall t \in [0, 1 - 3 \theta]  ,$$
		which shows 
		\begin{equation} \label{eq-lower-a} 
		\begin{split}
		\alpha_\theta  \ge & \  (1 - 2 \theta )^{-2}  \,  \dfrac{1}{4} \max \limits_{ s \in \left[0, \frac{1- 3 \theta}{1 - 2 \theta}  \right]}  \max\limits_{\S}  |g '(s )|^2_{g (s )}  . 
		\end{split}
		\end{equation}
		On the other hand, also by \eqref{eq-xi-theta},
		\begin{equation}  \label{eq-upper-a}
		\alpha_\theta \le   (1 - 2 \theta )^{-2} \a . 
		\end{equation} 
		Letting $\theta \to 0$,  \eqref{eq-a-limit}  follows from \eqref{eq-lower-a} and \eqref{eq-upper-a}. 
		
		Now,  by  \eqref{eq-b-theta} and \eqref{eq-a-limit},  we may assume 
		$$ \frac14 H_o^2 r_o^2 <  \frac{\beta_\theta }{1 + \alpha_\theta}   \  \ 
		\mathrm{and} \  \ 
		\frac14 H_o^2 r_o^2 < \frac{\beta_\theta }{1 + \alpha_\theta}  \left( 1 - \frac{2m}{r_o} \right) $$
		for sufficiently  small $\theta$. 
		For these $\theta$,  Theorem \ref{thm-extension}  holds for every $m_e $ satisfying 
		\begin{equation} \label{eq-m-theta}
		m_e > m_{\theta *} , 
		\end{equation}
		where $m_{\theta *} $ is  the  constant in \eqref{eq-m-star} defined via $\alpha_\theta$ and $\beta_\theta$.
		Now suppose 
		$ m_e > m_{*} $. 
		Since
		$ \lim_{ \theta \to 0} m_{\theta *} = m_* $  by \eqref{eq-b-theta} and \eqref{eq-a-limit}, 
		$m_e $ must satisfy \eqref{eq-m-theta} for small $\theta$. Thus, 
		the conclusion of Theorem \ref{thm-extension}  holds for  $m_e $.  
		This completes the proof of Theorem \ref{thm-extension} 
		if $g$ is not a round metric. 
		
		Finally, we consider the case that $g$ is a round metric, say $ g = r_o^2 g_*$. In this case, we have $ \alpha = 0 $, $ m_* = \mh(\S) $, and $(\S, g, H_o)$ is simply  the boundary data of the spatial Schwarzschild metric $  \gamma_{m_*} = ds^2 + \u_{m_*} (s)^2 g_*  $ on $[s_0, \infty) \times \mathbb{S}^2 $, where   $ \u_{m_*} (s_0) = r_o $. Now pick any $ s_1 > s_0 $.  Let $ \delta > 0 $ be a small  constant to be chosen. Similar to  Lemma 2.3  in \cite{M-S}, we  consider  a smooth  function $ \tau$  on $ [s_0,  s_1 + \delta ]$ defined by  
		$\tau (s) = s $ for $ s \in  [ s_0, s_1]  $, and indirectly via
		\begin{align*} 
		\tau'(s)=1 -  e^{-1/(s-s_{1})^2} , \  s_1 \le s \le s_1 + \delta  . 
		\end{align*}
		Define a metric $ \gamma_{B} \coloneqq  ds^2 + \u_{m_*} ( \tau (s) )^2 g_* $. Then $\gamma_B$ coincides with $\gamma_{m_*}$ on $[s_0, s_1] \times \mathbb{S}^2$ and has positive scalar curvature  on $( s_1, s_{1} + \delta ] \times \mathbb{S}^2$ for sufficiently small $\delta$. Let $\mh (\S_{s_1} )$ and $\mh (\S_{s_1+ \delta} )$ denote the Hawking masses of $ \S_{s_1} = \{ s_1  \} \times \mathbb{S}^2  $ and $ \S_{s_1+ \delta} = \{ s_1 + \delta \} \times \mathbb{S}^2 $ with respect to $ \gamma_B$, respectively. Then $\mh (\S_{s_1} ) = m_*$, and  $\mh (\S_{s_1+ \delta} )$ can be made arbitrarily close to  $\mh (\S_{s_1} )$, provided $\delta$ is sufficiently small. 
		Thus, given any fixed $ m_e > m_* $,  we can choose $\delta  $ sufficiently small to ensure
		$$
		m_e  > \mh (\S_{s_1+ \delta} ) > 0 . 
		$$
		Moreover, we can assume $ \frac{d}{ds} \u_{m_*} (\tau (s) ) |_{s = s_1 + \delta} > 0 $, i.e., $ \S_{s_1+\delta} $ has positive mean curvature with respect to $\gamma_B$. The theorem now again follows from Proposition \ref{prop-extension}. 
	\end{proof}

	\begin{remark}
		Taking $m = 0 $ in  Theorem \ref{thm-extension}, one has  
		$ k = \frac12 H_o r_o$ and 
		\begin{align*}
		\dfrac{1}{2} A_o k (1-k^2)  =   \left[  \frac{ \alpha \left(  \frac14 H_o^2  r_o^2 \right) } { \beta  - \left( 1 +  \alpha \right) \left(  \frac14 H_o^2  r_o^2 \right) } \right]^\frac12   \mh (\So) .
		\end{align*}
		Hence, Theorem \ref{thm-extension}  implies  Theorem \ref{thm-intro}. 
	\end{remark}
	
	\begin{remark}
		In Theorem \ref{thm-extension}, if $g$ is close to  a fixed round metric $g_*$ in  $C^{2, \eta}$-norm for some $\eta \in (0,1)$, then, by the proof of  \cite[Proposition 4.1]{M-X},  one can find a particular path $\{ g(t) \}_{ 0 \le t \le 1}$ such that the associated constants  $\alpha $ and $\beta$ satisfy 
		$$ \alpha \le C || g - g_* ||^2_{C^{0, \eta} (\Sigma)  } \ \ \mathrm{and} \ \ \beta \ge 1 - C || g - g_* ||_{C^{2, \eta} (\Sigma) } $$
		for some constant $ C> 0$ independent of $g$. 
		As a result,  $ \alpha \to 0 $ and $ \beta \to 1 $, as $ g $ tends to $ g_*$ in $ C^{2,\eta} (\Sigma)$.
	\end{remark}
	
	\begin{remark}
		Varying $m$ subject to \eqref{eq-thm-choice-m}, Theorem \ref{thm-extension}  gives different 
		estimates $m_{*}$  for the Bartnik mass $\mb (\S)$. 
		We refer readers to  Appendix A of \cite{M-X}  for this optimality analysis  given in a different context. 
	\end{remark}
	
	\vh


\begin{thebibliography}{10}
		
		\bibitem{ADM} Arnowitt, R.; Deser, S., and Misner, C. W., {\sl Coordinate invariance and energy expressions in general relativity}, Phys. Rev., {\bf 122} (1961), no. 3, 997--1006.
		
		\bibitem{Bartnik} Bartnik,  R., {\sl New definition of quasilocal mass}, Phys. Rev. Lett., {\bf 62} (1989),  2346--2348.
		
		\bibitem{BartnikTsingHua}  Bartnik, R., {\sl Energy in General Relativity}, Tsing Hua Lectures on Geometry and Analysis, International Press of Boston, Cambridge MA, (1997), 5--27.
		
		\bibitem{Bray01} Bray, H. L., 
		{\sl Proof of the Riemannian Penrose inequality using the positive mass theorem}, 
		J. Differential  Geom., {\bf 59} (2001), no. 2, 177--267.
		
		\bibitem{Bray04} H. L. Bray and P. T. Chrus\'ciel, The Penrose inequality, in: The Einstein Equations and the Large Scale
		Behavior of Gravitational Fields, Eds P. T. Chrus\'ciel and H. Friedrich, Birkh\"auser Verlag, Basel, (2004), p.
		39-70.
		
		\bibitem{C-M}  Cabrera Pacheco, A. J.;  Miao, P.,
		{\sl Higher dimensional black hole initial data with prescribed boundary metric}, arXiv:1505.01800.
		
		
		\bibitem{Hawking} Hawking, S. W., {\sl Black holes in general relativity}, Comm. Math. Phys., {\bf 25} (1972), no. 2, 152--166.
		
		\bibitem{H-I01} Huisken, G.; Ilmanen, T., {\sl The inverse mean curvature flow and the {R}iemannian {P}enrose inequality}, 
		J. Differential  Geom., {\bf 59} (2001), no. 3, 353--437.
		
		\bibitem{L-S} Lin, C-Y.;  Sormani, C., {\sl Bartnik's mass and Hamilton's modified Ricci flow}, Ann. Henri Poincar\'e, {\bf 17} (2016), no. 10, 2783--2800.
		
		\bibitem{M-S} Mantoulidis, C.;  Schoen,  R., {\sl On the Bartnik mass of apparent horizons}, Class. Quantum Grav., {\bf 32} (2015), no. 20, 205002, 16pp.
		
		\bibitem{Miao02}
		Miao, P., {\sl Positive mass theorem on manifolds admitting corners along a hypersurface}, 
		Adv. Theor. Math. Phys., {\bf 6} (2002), no. 6, 1163--1182.
		
		\bibitem{M-X}  Miao, P.;  Xie, N.-Q., 
		{\sl On compact $3$-manifolds with nonnegative scalar curvature with a CMC boundary component}, arXiv:1610.07513.
		
		\bibitem{ShiTam02} Shi, Y.-G.;  Tam,  L.-F.,
		{\sl Positive mass theorem and the boundary behaviors of compact manifolds with nonnegative scalar curvature},
		J. Differential Geom. \textbf{62}  (2002),  no.1, 79--125.
		
	\end{thebibliography}
\end{document}